 \newtheorem{thm}{Theorem}[section]
 \newtheorem{prop}[thm]{Proposition}
 \theoremstyle{definition}
 \theoremstyle{remark}
 \newtheorem{rem}[thm]{Remark}
 \numberwithin{equation}{section}
\newcommand{\X}{\mathfrak{X}}
\newcommand{\s}{\mathfrak{S}}
\newcommand{\g}{\mathfrak{g}}
\newcommand{\W}{\mathcal{W}}
\newcommand{\norm}[1]{\left\Vert#1\right\Vert ^2}
\newcommand{\nJ}{\norm{\nabla J}}
\newcommand{\ntJ}{\norm{\tilde{\nabla} J}}
\newcommand{\thmref}[1]{Theorem~\ref{#1}}
\newcommand{\propref}[1]{Proposition~\ref{#1}}
\begin{document}
%
%
\firstpage{1}
\DOI{006-xxxx-y}
%
%
%
%
\title[Quasi-K\"ahler manifolds with a pair of Norden metrics]
{Quasi-K\"ahler manifolds \\ with a pair of Norden metrics}
%
%
%
\author[D.~Mekerov]{Dimitar Mekerov}
\address{%
  Faculty of Mathematics and Informatics \\
  University of Plovdiv  \\
  236, Bulgaria Blvd. \\
  Plovdiv 4003, Bulgaria
}
\email{mircho@pu.acad.bg}


\author[M.~Manev]{Mancho Manev}
\address{%
  Faculty of Mathematics and Informatics \\
  University of Plovdiv  \\
  236, Bulgaria Blvd. \\
  Plovdiv 4003, Bulgaria
}
\email{mmanev@pu.acad.bg, mmanev@yahoo.com}

\author[K.~Gribachev]{Kostadin Gribachev}
\address{%
  Faculty of Mathematics and Informatics \\
  University of Plovdiv  \\
  236, Bulgaria Blvd. \\
  Plovdiv 4003, Bulgaria
}
\email{costas@pu.acad.bg}

\subjclass{Primary 53C15, 53C50; Secondary 32Q60, 53C55}

\keywords{almost complex manifold, Norden metric, quasi-K\"ahler
manifold, indefinite metric, non-integrable almost complex
structure, Lie group}


\begin{abstract}
The basic class of the non-integrable almost complex manifolds
with a pair of Norden metrics are considered. The interconnections
between corresponding quantities at the transformation between the
two Levi-Civita connections are given. A 4-parametric family of
4-dimensional quasi-K\"ahler manifolds with Norden metric is
characterized with respect to the associated Levi-Civita
connection.
\end{abstract}

\maketitle

\section*{Introduction}

It is a fundamental fact that on an almost complex manifold with
Hermitian metric (almost Hermitian manifold), the action of the
almost complex structure on the tangent space at each point of the
manifold is an isometry. There is another kind of metric, called a
Norden metric or a $B$-metric on an almost complex manifold, such
that action of the almost complex structure is an anti-isometry
with respect to the metric. Such a manifold is called an almost
complex manifold with Norden metric \cite{GaBo} or with $B$-metric
\cite{GaGrMi}. See also \cite{GrMeDj} for generalized
$B$-manifolds. It is known \cite{GaBo} that these manifolds are
classified into eight classes.

The basic class of the non-integrable almost complex manifolds
(the quasi-K\"ahlerian ones) with Norden metric is considered in
\cite{MekMan}. Its curvature properties are studied and the
isotropic K\"ahler type (a notion from \cite{GRMa}) of the
investigated manifolds is introduced and characterized
geometrically \cite{MekMan}. A 4-parametric family of
4-dimensional quasi-K\"ahler manifolds with Norden metric is
constructed on a Lie group in \cite{GrMaMe1}. This family is
characterized geometrically and there is given the condition for
such a 4-manifold to be isotropic K\"ahlerian.

In the present paper we continue studying the quasi-K\"ahler
manifolds with Norden metric. The purpose is to get of the
interconnections between corresponding quantities at the
transformation between the two Levi-Civita connections. Moreover,
characterizing of the known example from \cite{GrMaMe1} with
respect to the associated Levi-Civita connection.

\vskip 2pc


\section{Almost complex manifolds with Norden metric}\label{sec_1}

\subsection{Preliminaries}\label{sec-prelim}

Let $(M,J,g)$ be a $2n$-dimensional almost complex manifold with
Norden metric, i.e. $J$ is an almost complex structure and $g$ is
a metric on $M$ such that $J^2X=-X$, $g(JX,JY)=-g(X,Y)$ for all
differentiable vector fields $X$, $Y$ on $M$, i.e. $X, Y \in
\X(M)$.

The associated metric $\tilde{g}$ of $g$ on $M$ given by
$\tilde{g}(X,Y)=g(X,JY)$ for all $X, Y \in \X(M)$ is a Norden
metric, too. Both metrics are necessarily of signature $(n,n)$.
The manifold $(M,J,\tilde{g})$ is an almost complex manifold with
Norden metric, too.

Further, $X$, $Y$, $Z$, $U$ ($x$, $y$, $z$, $u$, respectively)
will stand for arbitrary differentiable vector fields on $M$
(vectors in $T_pM$, $p\in M$, respectively).

The Levi-Civita connection of $g$ is denoted by $\nabla$. The
tensor filed $F$ of type $(0,3)$ on $M$ is defined by
\begin{equation}\label{F}
F(X,Y,Z)=g\bigl( \left( \nabla_X J \right)Y,Z\bigr).
\end{equation}
It has the following symmetries
\begin{equation}\label{F-prop}
F(X,Y,Z)=F(X,Z,Y)=F(X,JY,JZ).
\end{equation}

Further, let $\{e_i\}$ ($i=1,2,\dots,2n$) be an arbitrary basis of
$T_pM$ at a point $p$ of $M$. The components of the inverse matrix
of $g$ are denoted by $g^{ij}$ with respect to the basis
$\{e_i\}$.

The Lie form $\theta$ associated with $F$ is defined by $
\theta(z)=g^{ij}F(e_i,e_j,z)$.

A classification of the considered manifolds with respect to $F$
is given in \cite{GaBo}. All eight classes of almost complex
manifolds with Norden metric are characterized there according to
the properties of $F$. The three basic classes are given as
follows
\begin{equation}\label{class}
\begin{array}{l}
\W_1: F(X,Y,Z)=\frac{1}{2n} \left\{
g(X,Y)\theta(Z)+g(X,Z)\theta(Y)\right. \\[4pt]
\phantom{\mathcal{W}_1: F(X,Y,Z)=\frac{1}{4n} }\left.
    +g(X,J Y)\theta(J Z)
    +g(X,J Z)\theta(J Y)\right\};\\[4pt]
\W_2: \mathop{\s} \limits_{X,Y,Z}
F(X,Y,J Z)=0,\quad \theta=0;\\[8pt]
\W_3: \mathop{\s} \limits_{X,Y,Z} F(X,Y,Z)=0,
\end{array}
\end{equation}
where $\s $ is the cyclic sum by three arguments.

The special class $\W_0$ of the K\"ahler manifolds with Norden
metric belonging to any other class is determined by the condition
$F=0$.

The only class of the three basic classes, where the almost
complex structure is not integrable, is the class $\W_3$ -- the
class of the \textit{quasi-K\"ahler manifolds with Norden metric}.
Let us remark that the definitional condition from \eqref{class}
implies the vanishing of the Lie form $\theta$ for the class
$\W_3$.                                                                            

\subsection{Curvature properties}\label{sec-curv}

Let $R$ be the curvature tensor field of $\nabla$ defined by
$
    R(X,Y)Z=\nabla_X \nabla_Y Z - \nabla_Y \nabla_X Z -
    \nabla_{[X,Y]}Z$.
The corresponding tensor field of type $(0,4)$ is determined by
$R(X,Y,Z,U)=g(R(X,Y)Z,U)$. The Ricci tensor $\rho$ and the scalar
curvature $\tau$ are defined as usual by
$\rho(y,z)=g^{ij}R(e_i,y,z,e_j)$ and $\tau=g^{ij}\rho(e_i,e_j)$.

It is well-known that the Weyl tensor $W$ on a $2n$-dimensional
pseudo-Rie\-mann\-ian manifold ($n\geq 2$) is given by
\begin{equation}\label{W}
    W=R-\frac{1}{2n-2}\psi_1(\rho)-\frac{\tau}{2n-1}\pi_1,
\end{equation}
where
\begin{equation}\label{psi-pi}
    \begin{array}{l}
      \psi_1(\rho)(x,y,z,u)=g(y,z)\rho(x,u)-g(x,z)\rho(y,u) \\
      \phantom{\psi_1(\rho)(x,y,z,u)}+\rho(y,z)g(x,u)-\rho(x,z)g(y,u); \\
      \pi_1=\frac{1}{2}\psi_1(g)=g(y,z)g(x,u)-g(x,z)g(y,u). \\
    \end{array}
\end{equation}

Moreover, the Weyl tensor $W$ is zero if and only if the manifold
is conformally flat.

\subsection{Isotropic K\"ahler manifolds}\label{sec-iK}
The square norm $\nJ$ of $\nabla J$ is defined in \cite{GRMa} by
\begin{equation}\label{snorm}
    \nJ=g^{ij}g^{kl}
        g\bigl(\left(\nabla_{e_i} J\right)e_k,\left(\nabla_{e_j}
    J\right)e_l\bigr).
\end{equation}

Having in mind the definition \eqref{F} of the tensor $F$ and the
properties \eqref{F-prop}, we obtain the following equation for
the square norm of $\nabla J$
$$
    \nJ=g^{ij}g^{kl}g^{pq}F_{ikp}F_{jlq},
$$
where $F_{ikp}=F(e_i,e_k,e_p)$.

An almost complex manifold with Norden metric $g$ satisfying the
condition $\nJ=0$ is called an \textit{isotropic K\"ahler manifold
with Norden metric} \cite{MekMan}.

\begin{rem}
It is clear, if a manifold belongs to the class $\W_0$, then
it is isotropic K\"ahlerian but the inverse statement is not
always true.
\end{rem}

\vskip 2pc

\section{The transformation $\nabla \rightarrow \tilde{\nabla}$ between the Levi-Civita connections
corresponding to the pair of Norden metrics}

\subsection{The interconnections between corresponding tensors at $\nabla \rightarrow \tilde{\nabla}$}
Let $\nabla$ be the Levi-Civita connection of $g$. Then the
following well-known condition is valid
\begin{equation}\label{LC}
    2g(\nabla_x y,z)=xg(y,z)+yg(x,z)-zg(x,y)
    +g([x,y],z)+g([z,x],y)+g([z,y],x).
\end{equation}
Let $\tilde\nabla$ be the Levi-Civita connection of $\tilde g$.
Then the corresponding condition of \eqref{LC} holds for
$\tilde\nabla$ and $\tilde g$. These two equations imply
immediately
\begin{equation}\label{2nabli}
    g(\nabla_x y-\tilde\nabla_x y,z)=\frac{1}{2}\bigl\{F(Jz,x,y)-F(x,y,Jz)-F(y,x,Jz)\bigr\}.
\end{equation}
Having in mind the properties \eqref{F-prop} of $F$, the condition
\eqref{2nabli} implies the following two equations
\[
    \tilde{F}(x,y,z)=\frac{1}{2}\bigl\{F(Jy,z,x)+F(y,z,Jx)+F(z,Jx,y)+F(Jz,x,y)\bigr\},
\]
\[
    \mathop{\s} \limits_{x,y,z} \tilde{F}(x,y,z)=\mathop{\s} \limits_{x,y,z}
    F(Jx,y,z),
\]
where $\tilde{F}$ is the corresponding structural tensor of $F$
for $(M,J,\tilde{g})$.

The last equation, the definition condition \eqref{class} of the
class $\W_3$ and the properties \eqref{F-prop} of $F$ imply
\begin{thm}\label{W3}
    The manifold $(M,J,\tilde{g})$ is quasi-K\"{a}hlerian with Norden metric if and only if the manifold
    $(M,J,g)$ is also quasi-K\"{a}hlerian with Norden    metric.$\hfill\Box$
\end{thm}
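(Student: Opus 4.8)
The plan is to exploit the characterization of the class $\W_3$ through the cyclic sum of the structural tensor, together with the key identity relating the cyclic sums of $\tilde{F}$ and $F$ that was just derived. Recall that $(M,J,g)$ is quasi-K\"ahlerian precisely when $\mathop{\s}\limits_{x,y,z} F(x,y,z)=0$, and similarly $(M,J,\tilde{g})$ is quasi-K\"ahlerian precisely when $\mathop{\s}\limits_{x,y,z}\tilde{F}(x,y,z)=0$. The last displayed equation in the excerpt reads
\[
    \mathop{\s}\limits_{x,y,z}\tilde{F}(x,y,z)=\mathop{\s}\limits_{x,y,z} F(Jx,y,z),
\]
so everything reduces to understanding the operator $F\mapsto \mathop{\s}\limits_{x,y,z}F(Jx,y,z)$ on the space of tensors satisfying the symmetries \eqref{F-prop}.

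First I would establish that the cyclic-sum condition is invariant under the substitution $x\mapsto Jx$. The natural approach is to show that the map sending the triple $(x,y,z)$ through $J$ in the first slot is, up to the symmetries of $F$, an involution on the relevant tensor space; concretely, I would check that applying the operation twice and using $J^2=-\mathrm{id}$ together with \eqref{F-prop} returns a nonzero multiple of the original cyclic sum. The cleanest route is to argue directly that $\mathop{\s}\limits_{x,y,z}F(x,y,z)=0$ if and only if $\mathop{\s}\limits_{x,y,z}F(Jx,y,z)=0$. One direction: if the cyclic sum of $F$ vanishes identically, then in particular it vanishes on the triple $(Jx,y,z)$, but the cyclic sum is taken over the three arguments, so I must be careful to symmetrize. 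The honest computation is to expand $\mathop{\s}\limits_{x,y,z}F(Jx,y,z)=F(Jx,y,z)+F(Jy,z,x)+F(Jz,x,y)$ and, using $F(X,Y,Z)=F(X,JY,JZ)$ from \eqref{F-prop}, rewrite each term so that $J$ can be moved around the cyclic pattern.

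The crucial step, and the one I expect to be the main obstacle, is verifying that this $J$-twisted cyclic sum vanishes exactly when the untwisted one does. The symmetries \eqref{F-prop} only tell us $F$ is symmetric in the last two arguments and invariant under $J$ applied to both of them simultaneously; they say nothing a priori about applying $J$ to the first argument alone. So the implication is not a one-line substitution. The plan is to replace $(x,y,z)$ by $(x,Jy,Jz)$ in the expression $\mathop{\s}\limits_{x,y,z}F(Jx,y,z)$ and use \eqref{F-prop} repeatedly to convert every factor of $J$ in a second or third slot into the identity, thereby reducing $\mathop{\s}\limits_{x,y,z}F(Jx,y,z)$ to a fixed scalar multiple of $\mathop{\s}\limits_{x,y,z}F(x,y,z)$ after relabeling. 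Once the proportionality $\mathop{\s}\limits_{x,y,z}\tilde{F}(x,y,z)=c\,\mathop{\s}\limits_{x,y,z}F(x,y,z)$ with $c\neq 0$ is in hand, the biconditional is immediate, since one side vanishes if and only if the other does.

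Finally I would assemble these pieces: the definitional condition of $\W_3$ from \eqref{class}, the identity for $\mathop{\s}\limits_{x,y,z}\tilde{F}$, and the established equivalence of the two cyclic-sum conditions together give that $(M,J,\tilde{g})$ lies in $\W_3$ iff $(M,J,g)$ does. Since membership in $\W_3$ is precisely being quasi-K\"ahlerian with Norden metric, this proves the theorem. I would be mindful throughout that the Lie form $\theta$ automatically vanishes on $\W_3$, as noted after \eqref{class}, so no separate verification of the $\theta=0$ condition is needed; the single cyclic-sum identity carries the entire argument.
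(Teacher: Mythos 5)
Your proposal is correct and follows essentially the same route as the paper, which likewise deduces the theorem from the identity $\mathop{\s}\limits_{x,y,z}\tilde{F}(x,y,z)=\mathop{\s}\limits_{x,y,z}F(Jx,y,z)$ together with the definition of $\W_3$ and the symmetries \eqref{F-prop}; your substitution argument is exactly the detail the paper leaves implicit. One small precision: the $J$-twisted cyclic sum is not a scalar multiple of $\mathop{\s}\limits_{x,y,z}F(x,y,z)$ at the same arguments, but rather equals that cyclic sum evaluated at $J$-shifted arguments (e.g.\ $F(Jx,Jy,Jz)=F(Jx,y,z)$ gives $\mathop{\s}\limits_{x,y,z}F(Jx,y,z)=\bigl(\mathop{\s}F\bigr)(Jx,Jy,Jz)$), which yields the desired equivalence because $J$ is invertible --- exactly the ``relabeling'' you anticipate.
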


Further, we consider only quasi-K\"{a}hler manifolds with Norden
metric. In this case the condition \eqref{2nabli} and the
definition \eqref{class} of $\W_3$ imply
\begin{equation}\label{nablatildeT}
    \tilde{\nabla}_x y=\nabla_x y+T(x,y),
\end{equation}
where
\begin{equation}\label{T}
    T(x,y)= \left(\nabla_x J\right)Jy+\left(\nabla_y J\right)Jx.
\end{equation}
Hence, by direct computations, we get
\begin{thm}\label{W3-F}
    Let $(M,J,g)$ and $(M,J,\tilde{g})$ be quasi-K\"{a}hler manifolds with Norden metric.
    Then the following equivalent interconnections are valid
    \begin{equation}\label{Ftilde-W3}
    \tilde{F}(x,y,z)=-F(Jx,y,z),
    \end{equation}
    \begin{equation}\label{nablatilde}
    \left(\tilde{\nabla}_x J\right)y=-\left(\nabla_{Jx}
    J\right)Jy.
    \end{equation}
    $\hfill\Box$
\end{thm}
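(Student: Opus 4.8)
The plan is to proceed exactly as the word ``Hence'' suggests: feed the connection-difference formula \eqref{nablatildeT}--\eqref{T} into the definition of the structural tensor and then reduce the resulting combination of $F$-values by means of \eqref{F-prop} and the quasi-K\"ahler condition in \eqref{class}. First I would expand $\left(\tilde{\nabla}_x J\right)y=\tilde{\nabla}_x(Jy)-J\tilde{\nabla}_x y$ and substitute \eqref{nablatildeT}, so that the difference of the two connections enters through $T(x,Jy)-JT(x,y)$. To handle these terms I would use the identity $J\left(\nabla_x J\right)=-\left(\nabla_x J\right)J$ (which is just the $J$-compatibility $F(X,Y,Z)=F(X,JY,JZ)$ of \eqref{F-prop} read off at the level of $\nabla J$) together with $J^2=-\mathrm{id}$. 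After collecting terms this should collapse $T(x,Jy)-JT(x,y)$ and leave a compact expression of the shape $\left(\tilde{\nabla}_x J\right)y=-\left(\nabla_x J\right)y+\left(\nabla_{Jy}J\right)Jx-\left(\nabla_y J\right)x$.

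Next I would pass to $\tilde{F}$ via $\tilde{F}(x,y,z)=\tilde{g}\bigl(\left(\tilde{\nabla}_x J\right)y,z\bigr)=g\bigl(\left(\tilde{\nabla}_x J\right)y,Jz\bigr)$, using $\tilde g(u,v)=g(u,Jv)$. Each of the three summands then becomes an $F$-value, giving a combination such as $-F(x,y,Jz)+F(Jy,x,z)-F(y,x,Jz)$. The decisive step is the reduction of this combination: applying the two symmetries of \eqref{F-prop} to move the $J$ onto a single argument, and then invoking the cyclic identity $\mathop{\s}\limits_{x,y,z}F(x,y,z)=0$ from the definition of $\W_3$ to merge the three terms into the single term $-F(Jx,y,z)$. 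This yields \eqref{Ftilde-W3}.

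Finally, for the equivalence with \eqref{nablatilde} I would argue by nondegeneracy of $g$. Rewriting the right-hand side of \eqref{Ftilde-W3} with the $J$-compatibility as $-F(Jx,y,z)=-F(Jx,Jy,Jz)=-g\bigl(\left(\nabla_{Jx}J\right)Jy,Jz\bigr)$ and comparing with $\tilde{F}(x,y,z)=g\bigl(\left(\tilde{\nabla}_x J\right)y,Jz\bigr)$, both sides are of the form $g(\,\cdot\,,Jz)$; since $z$ (hence $Jz$) is arbitrary and $g$ is nondegenerate, \eqref{Ftilde-W3} and \eqref{nablatilde} are equivalent.

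I expect the main obstacle to be the reduction in the second paragraph rather than the bookkeeping: the naive grouping of the $F$-terms does not obviously simplify, and one must apply the cyclic $\W_3$-relation to the correct pair of summands (e.g.\ to $F(x,Jy,z)+F(Jy,x,z)$) to produce a single surviving term. Getting every sign right through the repeated use of $J^2=-\mathrm{id}$ and the anti-isometry $g(JX,JY)=-g(X,Y)$ is the only other place where care is needed.
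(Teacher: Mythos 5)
Your proposal is correct and fills in exactly the ``direct computations'' that the paper omits: expanding $(\tilde{\nabla}_x J)y$ via \eqref{nablatildeT}--\eqref{T}, converting to $F$-values through $\tilde g(\cdot,\cdot)=g(\cdot,J\cdot)$, and reducing with \eqref{F-prop} and two applications of the cyclic $\W_3$-identity (first to the $Jy$-pair, then to the resulting $Jx$-pair) does yield \eqref{Ftilde-W3}, and your nondegeneracy argument for the equivalence with \eqref{nablatilde} is sound. This is the same route the paper intends; no discrepancy.
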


Let $\tilde{R}$ be the curvature tensor of $\tilde{\nabla}$. It is
well-known in the case of transformation of the connection of type
\eqref{nablatildeT} that corresponding curvature tensors have the
following interconnection
\begin{equation}\label{RtildeR_Q}
    \tilde{R}(x,y)z=R(x,y)z+Q(x,y)z,
\end{equation}
where
\begin{equation}\label{Q}
    Q(x,y)z= \left(\nabla_x T\right)(y,z)- \left(\nabla_y T\right)(x,z)
    +T\left(x,T(y,z)\right)-T\left(y,T(x,z)\right).
\end{equation}

By the definition, the corresponding curvature tensor of type
(0,4) is
\begin{equation}\label{Rtilde04}
    \tilde{R}(x,y,z,u)=\tilde{g}\left(\tilde{R}(x,y)z,u\right).
\end{equation}
\begin{thm}\label{W3-Rtilde}
    Let $(M,J,g)$ and $(M,J,\tilde{g})$ be quasi-K\"{a}hler manifolds with Norden metric.
    Then the corresponding curvature tensors at the transformation
    $\nabla \rightarrow \tilde{\nabla}$ satisfy the following equation
    \begin{equation}\label{RtildeR}
    \begin{array}{l}
    \tilde{R}(x,y,z,u)=R(x,y,z,Ju)-\left(\nabla_x F\right)(u,y,z)+
    \left(\nabla_y F\right)(u,x,z) \\[4pt]
    \phantom{\tilde{R}(x,y,z,u)=}
    -g\bigl(\left(\nabla_y J\right)z+\left(\nabla_z J\right)y,
            \left(\nabla_x J\right)Ju+\left(\nabla_u
            J\right)Jx\bigr) \\[4pt]
    \phantom{\tilde{R}(x,y,z,u)=}
    +g\bigl(\left(\nabla_x J\right)z+\left(\nabla_z J\right)x,
            \left(\nabla_y J\right)Ju+\left(\nabla_u
            J\right)Jy\bigr).
    \end{array}
    \end{equation}
\end{thm}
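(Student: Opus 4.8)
The plan is to begin from the general curvature-transformation formula \eqref{RtildeR_Q}--\eqref{Q}, lower an index with the associated metric $\tilde{g}$ as in \eqref{Rtilde04}, and systematically rewrite everything in terms of $\nabla J$ and $F$. First I would handle the leading term: since $\tilde{g}(R(x,y)z,u)=g(R(x,y)z,Ju)=R(x,y,z,Ju)$, the $R$-contribution to $\tilde{R}(x,y,z,u)$ is immediately $R(x,y,z,Ju)$, matching the first term on the right of \eqref{RtildeR}. The real work is converting $\tilde{g}(Q(x,y)z,u)$ into the remaining three terms.

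Next I would expand $\tilde{g}(Q(x,y)z,u)$ using \eqref{Q}, splitting it into the covariant-derivative part $\tilde{g}\bigl((\nabla_x T)(y,z)-(\nabla_y T)(x,z),u\bigr)$ and the quadratic part $\tilde{g}\bigl(T(x,T(y,z))-T(y,T(x,z)),u\bigr)$. For the quadratic part, I would substitute the definition \eqref{T} of $T$ twice and convert each $\tilde{g}(\cdot,u)=g(\cdot,Ju)$, then repeatedly apply the anti-isometry $g(J\cdot,J\cdot)=-g(\cdot,\cdot)$ together with the symmetries \eqref{F-prop} to collapse the four nested $\nabla J$ terms into the two symmetric $g\bigl((\nabla J)+(\nabla J),(\nabla J)+(\nabla J)\bigr)$ blocks appearing on the last two lines of \eqref{RtildeR}. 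For the derivative part, I would use that $(\nabla_x T)(y,z)=(\nabla_x\nabla_\cdot J)$-type expressions and that, in the $\W_3$ case, $\nabla J$ is totally skew in a suitable sense via \eqref{class}; writing $\tilde{g}((\nabla_x T)(y,z),u)=g((\nabla_x T)(y,z),Ju)$ and pushing $J$ through the covariant derivative (using $\nabla J\neq 0$, so $\nabla(JU)=(\nabla J)U+J\nabla U$ produces correction terms) should reduce the antisymmetrized derivative part to $-(\nabla_x F)(u,y,z)+(\nabla_y F)(u,x,z)$, where the extra $(\nabla J)\cdot(\nabla J)$ corrections generated by commuting $J$ past $\nabla$ must be absorbed into the quadratic blocks.

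The main obstacle I expect is bookkeeping in the derivative part: because $J$ is not parallel, moving the $J$ from $\tilde{g}(\,\cdot\,,u)=g(\,\cdot\,,Ju)$ inside the derivative $(\nabla_x T)$ and re-expressing the result through $F$ via \eqref{F} generates several quadratic-in-$\nabla J$ remainder terms. These remainders do not match the target form individually; only after combining them with the genuinely quadratic part of $Q$ and invoking both the symmetries \eqref{F-prop} and the quasi-K\"ahler identity $\mathop{\s}_{x,y,z}F(x,y,z)=0$ from \eqref{class} will they consolidate into the clean symmetric expression on the right-hand side of \eqref{RtildeR}. The key identities driving every simplification are the anti-isometry $g(JX,JY)=-g(X,Y)$, the $F$-symmetries \eqref{F-prop}, and the cyclic vanishing defining $\W_3$; careful antisymmetrization in $x,y$ (from $Q$ being antisymmetric in its first two slots) is what guarantees the surviving first-derivative terms assemble precisely into $-(\nabla_x F)(u,y,z)+(\nabla_y F)(u,x,z)$.
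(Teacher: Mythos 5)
Your proposal is correct and follows essentially the same route as the paper: both expand $\tilde g(Q(x,y)z,u)=g(Q(x,y)z,Ju)$ from \eqref{RtildeR_Q}--\eqref{Rtilde04}, split $Q$ into its antisymmetrized first-derivative part and its quadratic part, and use the anti-isometry of $J$, the symmetries \eqref{F-prop}, and the $\W_3$ condition $\s F=0$ to reduce everything to \eqref{RtildeR}. The paper merely packages the bookkeeping you describe by first proving the identity $g(T(y,z),u)=F(Ju,y,z)$, from which the derivative terms and the $(\nabla J)\cdot(\nabla J)$ corrections you anticipate fall out exactly as in \eqref{nablaT} and \eqref{TT}.
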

\begin{proof}
Let us denote $T(y,z,u)=g\left(T(y,z),u\right)$. According to
\eqref{T}, the properties of $F$ and the condition $\mathop{\s}
F=0$, we get $T(y,z,u)=F(Ju,y,z)$. Having in mind \eqref{T} and
$\nabla g=0$, we obtain
\[
    g\bigl(\left(\nabla_x T\right)(y,z),u\bigr)=
    \left(\nabla_x T\right)(y,z,u).
\]
Then, by direct computations, we have
\begin{equation}\label{nablaT}
    \begin{array}{l}
    \left(\nabla_x T\right)(y,z,u)-\left(\nabla_y T\right)(x,z,u)=
    \left(\nabla_x F\right)(Ju,y,z)-\left(\nabla_y F\right)(Ju,x,z)\\[4pt]
    \phantom{    \left(\nabla_x T\right)(y,z,u)-\left(\nabla_y T\right)(x,z,u)=}
    +F\bigl(\left(\nabla_x J\right)u,y,z\bigr)
    -F\bigl(\left(\nabla_y J\right)u,x,z\bigr),
    \end{array}
\end{equation}
\begin{equation}\label{TT}
    \begin{array}{l}
    T\left(x,T(y,z)\right)-T\left(y,T(x,z)\right)=
    g\bigl(\left(\nabla_{Ju} J\right)x,
            \left(\nabla_y J\right)Jz+\left(\nabla_z J\right)Jy\bigr) \\[4pt]
    \phantom{T\left(x,T(y,z)\right)-T\left(y,T(x,z)\right)}
    -g\bigl(\left(\nabla_{Ju} J\right)y,
            \left(\nabla_x J\right)Jz+\left(\nabla_z J\right)Jx\bigr).
    \end{array}
\end{equation}
The equations \eqref{RtildeR_Q}, \eqref{Q}, \eqref{Rtilde04},
\eqref{nablaT}, \eqref{TT} imply immediately the interconnection
\eqref{RtildeR} of the statement.
\end{proof}

\subsection{The invariant tensors of the transformation $\nabla \rightarrow \tilde{\nabla}$}
Having in mind the tensor $T$ determined by \eqref{T}, we denote
the corresponding tensor $\tilde{T}$ of $\tilde{\nabla}$. Since we
suppose the conditions \eqref{W3-F} and $\mathop{\s} F=0$, we
obtain
\begin{equation}\label{TtildeT}
    \tilde{T}(x,y)=-T(x,y).
\end{equation}
The last equation and \eqref{nablatildeT} imply
\[
    \tilde{\nabla}_x y+\frac{1}{2}\tilde{T}(x,y)=\nabla_x y+\frac{1}{2}T(x,y).
\]

Let $\tilde{Q}$ be the corresponding tensor for $\tilde{\nabla}$
of the tensor $Q$ determined by \eqref{Q}. Using \eqref{TtildeT},
we obtain the following interconnection between them
\[
    \tilde{Q}(x,y)z=-Q(x,y)z.
\]
Then, from \eqref{RtildeR_Q} we have
\[
    \tilde{R}(x,y)z+\frac{1}{2}\tilde{Q}(x,y)z=R(x,y)z+\frac{1}{2}Q(x,y)z.
\]
Therefore, the following  theorem is valid.
\begin{thm}\label{inv}
    Let $(M,J,g)$ and $(M,J,\tilde{g})$ be quasi-K\"{a}hler manifolds with Norden metric.
    The following two tensors are invariant with respect to the
    transformation $\nabla \rightarrow \tilde{\nabla}$
    \[
    S(x,y)=\nabla_x y+\frac{1}{2}T(x,y),
    \qquad
    P(x,y)z=R(x,y)z+\frac{1}{2}Q(x,y)z.
    \]
 $\hfill\Box$
 \end{thm}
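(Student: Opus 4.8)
The plan is to reduce the asserted invariance of $S$ and $P$ to two sign-flip identities for the difference tensors of the transformation, namely the relation $\tilde{T}(x,y)=-T(x,y)$ of \eqref{TtildeT} together with its curvature analogue $\tilde{Q}(x,y)z=-Q(x,y)z$, where $\tilde{T}$ and $\tilde{Q}$ are the tensors built from the associated geometry $(M,J,\tilde{g})$ exactly as $T$ and $Q$ are built from $(M,J,g)$ in \eqref{T} and \eqref{Q}. Invariance of $S$ means $\tilde{\nabla}_x y+\frac12\tilde{T}(x,y)=\nabla_x y+\frac12 T(x,y)$, which follows by inserting $\tilde{\nabla}_x y=\nabla_x y+T(x,y)$ from \eqref{nablatildeT} and then $\tilde{T}=-T$. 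Invariance of $P$ means $\tilde{R}(x,y)z+\frac12\tilde{Q}(x,y)z=R(x,y)z+\frac12 Q(x,y)z$, which follows by inserting $\tilde{R}=R+Q$ from \eqref{RtildeR_Q} and then $\tilde{Q}=-Q$. Thus the whole statement rests on the two sign-flip identities.

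First I would prove $\tilde{T}=-T$, which I expect to be the main obstacle. Starting from the $\tilde{\nabla}$-version $\tilde{T}(x,y)=(\tilde{\nabla}_x J)Jy+(\tilde{\nabla}_y J)Jx$ of \eqref{T}, I would insert the interconnection $(\tilde{\nabla}_x J)y=-(\nabla_{Jx}J)Jy$ of \eqref{nablatilde} from Theorem \ref{W3-F} and simplify with $J^2=-\mathrm{id}$, reaching $\tilde{T}(x,y)=(\nabla_{Jx}J)y+(\nabla_{Jy}J)x$. It then remains to match this against $-T(x,y)=-(\nabla_x J)Jy-(\nabla_y J)Jx$; pairing both sides with an arbitrary vector and writing everything through $F$ via \eqref{F}, the identity to be checked reduces to $F(Jx,y,w)+F(Jy,x,w)=-F(x,Jy,w)-F(y,Jx,w)$, which I would deduce from the symmetries \eqref{F-prop} and the cyclic condition $\mathop{\s}F=0$ of the class $\W_3$ in \eqref{class}. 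This is the step where the algebraic interplay of $J$, the metric, and the cyclic sum is genuine, and it is where I expect the calculation to be most delicate. Alternatively, one can read $\tilde{T}=-T$ off the relation $\tilde{g}(\tilde{T}(x,y),w)=\tilde{F}(Jw,x,y)$ combined with $\tilde{F}(x,y,z)=-F(Jx,y,z)$ of \eqref{Ftilde-W3} and $\tilde{g}(\cdot,\cdot)=g(\cdot,J\cdot)$.

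For the curvature analogue $\tilde{Q}=-Q$ I would prefer the structural argument: the reverse change of connection $\tilde{\nabla}\to\nabla$ has difference tensor exactly $-T=\tilde{T}$, since \eqref{nablatildeT} gives $\nabla_x y=\tilde{\nabla}_x y-T(x,y)=\tilde{\nabla}_x y+\tilde{T}(x,y)$; applying the general curvature-transformation formula \eqref{RtildeR_Q}--\eqref{Q} to this reverse transformation yields $R=\tilde{R}+\tilde{Q}$, and subtracting $\tilde{R}=R+Q$ gives $\tilde{Q}=-Q$ at once. A direct verification from \eqref{Q} is also possible but needs care: substituting $\tilde{T}=-T$ alone flips the sign of the derivative terms yet leaves the quadratic terms $\tilde{T}(x,\tilde{T}(y,z))-\tilde{T}(y,\tilde{T}(x,z))$ with the same sign as in $Q$, so one must additionally expand the $\tilde{\nabla}$-covariant derivatives through $\tilde{\nabla}=\nabla+T$, whereupon the extra $T$-quadratic contributions precisely convert the total into $-Q$. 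With both sign-flips in hand, the two substitutions of the first paragraph complete the proof.
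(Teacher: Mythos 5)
Your proposal is correct and follows essentially the same route as the paper: reduce both invariances to the sign-flips $\tilde{T}=-T$ (obtained from Theorem~\ref{W3-F} together with $\mathop{\s}F=0$) and $\tilde{Q}=-Q$, then substitute into \eqref{nablatildeT} and \eqref{RtildeR_Q}. Your treatment is in fact more careful than the paper's at the step $\tilde{Q}=-Q$, which the paper asserts in one line; your reverse-transformation argument (applying \eqref{RtildeR_Q}--\eqref{Q} to $\nabla=\tilde{\nabla}+\tilde{T}$ to get $R=\tilde{R}+\tilde{Q}$) cleanly supplies the justification and correctly flags why a naive substitution of $\tilde{T}=-T$ into \eqref{Q} would not suffice.
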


\vskip 2pc
\section{The Lie group as a 4-dimensional $\W_3$-manifold and its characteristics}
\label{sec_3}

In \cite{GrMaMe1} is constructed an example of a 4-dimensional Lie
group equipped with a quasi-K\"{a}hler structure and Norden metric
$g$. There it is characterized with respect to the Levi-Civita
connection $\nabla$. Now, we recall the facts known from
\cite{GrMaMe1} and give their corresponding ones with respect to
the Levi-Civita connection $\tilde{\nabla}$.

\begin{thm}[\cite{GrMaMe1}]\label{G}
Let $(G,J,g)$ be a 4-dimensional almost complex manifold with
Norden metric, where $G$ is a connected Lie group with a
corresponding Lie algebra $\g$ determined by the global basis of
left invariant vector fields $\{X_1,X_2,X_3,X_4\}$; $J$ is an
almost complex structure defined by
\begin{equation}\label{J}
JX_1=X_3,\quad JX_2=X_4,\quad JX_3=-X_1,\quad JX_4=-X_2;
\end{equation}
$g$ is an invariant Norden metric determined by
\begin{equation}\label{g}
\begin{array}{l}
  g(X_1,X_1)=g(X_2,X_2)=-g(X_3,X_3)=-g(X_4,X_4)=1, \\[4pt]
  g(X_i,X_j)=0\quad \text{for}\quad i\neq j \\
\end{array}
\end{equation}
and
\begin{equation}\label{inv-metric}
    g\left([X,Y],Z\right)+g\left([X,Z],Y\right)=0.
\end{equation}
Then $(G,J,g)$ is a quasi-K\"ahler manifold with Norden metric if
and only if $G$ belongs to the 4-parametric family of Lie groups
determined by the conditions
\begin{equation}\label{[]4}
\begin{array}{ll}
[X_1,X_3]=\lambda_2 X_2+\lambda_4 X_4,\quad &
[X_2,X_4]=\lambda_1 X_1+\lambda_3 X_3,\\[4pt]
[X_2,X_3]=-\lambda_2 X_1-\lambda_3 X_4,\quad &
[X_3,X_4]=-\lambda_4 X_1+\lambda_{3} X_2,\\[4pt]
[X_4,X_1]=\lambda_1 X_2+\lambda_4 X_3,\quad &
[X_2,X_1]=-\lambda_2 X_3+\lambda_1 X_4.\\[4pt]
\end{array}
\end{equation}
$\hfill\Box$
\end{thm}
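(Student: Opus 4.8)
The plan is to reduce everything to the Lie algebra $\g$ by exploiting left-invariance. Since $g$ is left-invariant, each function $g(X_i,X_j)$ is constant, so the first three terms of the Koszul formula \eqref{LC} vanish on the basis $\{X_1,X_2,X_3,X_4\}$. Moreover, applying \eqref{inv-metric} with cyclically permuted arguments shows that the two remaining bracket terms $g([Z,X],Y)+g([Z,Y],X)$ cancel, leaving $2g(\nabla_{X}Y,Z)=g([X,Y],Z)$ for all left-invariant $X,Y,Z$. Hence the Levi-Civita connection of $g$ is simply $\nabla_{X}Y=\frac12[X,Y]$, and the whole problem becomes algebraic. (One checks en passant that this connection is torsion-free and metric, the latter again using \eqref{inv-metric}.)

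Next I would determine the admissible brackets from the invariance hypothesis alone. Writing $c_{ijk}=g([X_i,X_j],X_k)$, skew-symmetry of the bracket gives $c_{ijk}=-c_{jik}$, while \eqref{inv-metric} gives $c_{ijk}=-c_{ikj}$; together these force $c_{ijk}$ to be totally skew-symmetric. In dimension four a totally skew $(0,3)$-tensor has exactly four independent components, indexed by $\{1,2,3\},\{1,2,4\},\{1,3,4\},\{2,3,4\}$. Raising the last index with $g=\mathrm{diag}(1,1,-1,-1)$ to recover $[X_i,X_j]$ then reproduces precisely the six brackets of \eqref{[]4}, the four free components being identified (up to sign) with $\lambda_1,\dots,\lambda_4$. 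This already yields the implication ``quasi-K\"ahler $\Rightarrow$ \eqref{[]4}'': under the standing hypotheses the brackets are necessarily of this shape. I would also verify a representative Jacobi identity (say on $X_1,X_2,X_3$) and invoke the symmetry of the configuration to confirm that \eqref{[]4} defines a genuine Lie algebra for every value of the parameters, so that the family is honestly $4$-parametric.

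For the converse I would compute the structural tensor. From $\nabla_X Y=\frac12[X,Y]$ and the self-adjointness of $J$ with respect to $g$ (which follows from the symmetry of the associated metric $\tilde g$), one obtains $F(X,Y,Z)=\frac12\bigl\{g([X,JY],Z)-g([X,Y],JZ)\bigr\}$. Forming the cyclic sum and writing $c(X,Y,Z)=g([X,Y],Z)$, total skew-symmetry lets me cyclically relabel each term so that $\mathop{\s}\limits_{X,Y,Z} g([X,JY],Z)$ and $\mathop{\s}\limits_{X,Y,Z} g([X,Y],JZ)$ both collapse to the single expression $\mathop{\s}\limits_{X,Y,Z} g([JX,Y],Z)$. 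Their difference therefore vanishes, so $\mathop{\s}\limits_{X,Y,Z}F(X,Y,Z)=0$ and the defining condition of $\W_3$ in \eqref{class} holds. Thus every Lie group in the family \eqref{[]4} is quasi-K\"ahlerian, which closes the equivalence.

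The routine part is the reduction of $\nabla$ and of $F$; the delicate part is the sign bookkeeping in the indefinite setting --- tracking the signature factors $g(X_k,X_k)=\pm1$ when raising the third index, and the signs introduced by $J$ on $X_3,X_4$ via \eqref{J}. I expect the main technical obstacle to be organizing these signs cleanly enough both to see that the six brackets assemble exactly into \eqref{[]4} and to see that the two cyclic sums coincide. Once the total skew-symmetry of $c_{ijk}$ is secured, however, both points reduce to bookkeeping rather than genuine difficulty.
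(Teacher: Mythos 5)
The paper offers no proof of this statement: it is quoted from \cite{GrMaMe1} and stamped with a box, so there is no in-house argument to compare yours against. Judged on its own terms, your proposal is correct and complete in outline. The Koszul-formula reduction gives exactly the identity \eqref{invLC} that the paper itself uses later, and your central observation is sound: skew-symmetry of the bracket together with \eqref{inv-metric} forces $c_{ijk}=g([X_i,X_j],X_k)$ to be totally skew, leaving only $c_{123},c_{124},c_{134},c_{234}$ free; raising the last index with $\mathrm{diag}(1,1,-1,-1)$ and setting $c_{124}=\lambda_1$, $c_{123}=-\lambda_2$, $c_{234}=\lambda_3$, $c_{134}=-\lambda_4$ reproduces all six brackets of \eqref{[]4}. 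Likewise $F(X,Y,Z)=\frac{1}{2}\bigl\{g([X,JY],Z)-g([X,Y],JZ)\bigr\}$ and cyclic invariance of the $3$-form $c$ collapse both cyclic sums to $\mathop{\s}g([JX,Y],Z)$, so $\mathop{\s}F=0$. Two points deserve emphasis. First, your argument proves more than is asked: \eqref{inv-metric} alone already implies \eqref{[]4}, and \eqref{[]4} already implies $\mathop{\s}F=0$, so under the stated hypotheses both sides of the ``if and only if'' hold identically --- the theorem is really a parametrization result, and your route makes that transparent. Second, do not wave away the Jacobi identity: total skew-symmetry of $c$ does not guarantee it in general, so the claim that \eqref{[]4} is an honest $4$-parametric family of Lie algebras genuinely needs the verification you defer (it does hold identically in the $\lambda_i$ for all four index triples, so the check succeeds). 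As a by-product, your computation exposes an apparent misprint in \eqref{Fijk}: direct evaluation, and consistency with $\mathop{\s}F=0$, give $F_{211}=F_{233}=-\lambda_2$, so the coefficient there should read $-F_{211}=-F_{233}=\lambda_2$ rather than $-2F_{211}=-2F_{233}=\lambda_2$, matching the pattern $-F_{122}=\lambda_1$, $F_{322}=\lambda_3$, $F_{411}=\lambda_4$ in the other three lines.
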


\thmref{W3} and \thmref{G} imply the following
\begin{thm}
The manifold $(G,J,\tilde{g})$ is a quasi-K\"ahler manifold with
Norden metric. $\hfill\Box$
\end{thm}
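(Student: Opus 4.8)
The plan is to deduce the statement directly from the two preceding theorems, with no new computation. The key observation is that \thmref{W3} is a \emph{general} equivalence, valid on any almost complex manifold with a pair of Norden metrics; in particular it applies verbatim to the specific manifold $G$ produced by \thmref{G}. So the whole task reduces to recognizing $G$ as an admissible instance of the abstract manifold $M$ appearing in \thmref{W3}.

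Concretely, I would proceed in three short steps. First, I would recall from Section~\ref{sec-prelim} that the associated metric $\tilde{g}$, defined by $\tilde{g}(X,Y)=g(X,JY)$, is itself a Norden metric, and that consequently $(M,J,\tilde{g})$ is again an almost complex manifold with Norden metric; applied to $M=G$ this guarantees that the ambient hypotheses of \thmref{W3} are satisfied for the pair $(g,\tilde{g})$ on $G$. Second, I would invoke \thmref{G}, which asserts that, under the stated bracket relations, the manifold $(G,J,g)$ lies in the class $\W_3$, i.e. it is quasi-K\"ahler with Norden metric. Third, I would apply \thmref{W3} with $M=G$: since $(G,J,g)$ is quasi-K\"ahlerian, the equivalence immediately forces $(G,J,\tilde{g})$ to be quasi-K\"ahlerian with Norden metric as well.

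There is essentially no obstacle here, since the proof is a one-line specialization of \thmref{W3}. The only point deserving even trivial care is verifying that the hypotheses of \thmref{W3} genuinely hold for $G$ — namely that $g$ and $\tilde{g}$ are both Norden metrics with respect to the \emph{same} almost complex structure $J$ — which is exactly what the construction in \thmref{G} and the remarks in Section~\ref{sec-prelim} provide. Hence the statement follows at once, and I would simply cite \thmref{W3} together with \thmref{G} to close the argument.
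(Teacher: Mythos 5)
Your proposal is correct and coincides with the paper's own argument: the paper states the theorem as an immediate consequence of \thmref{W3} and \thmref{G}, which is exactly the specialization you describe. Nothing further is needed.
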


The components of the Levi-Civita connection $\nabla$ are
determined (\cite{GrMaMe1}) by \eqref{[]4} and
\begin{equation}\label{invLC}
    \nabla_{X_i} X_j=\frac{1}{2}[X_i,X_j]\quad (i,j=1,2,3,4).
\end{equation}
According to \eqref{nablatildeT}, \eqref{T} and \eqref{invLC}, we
receive the following equation for the Levi-Civita connection
$\tilde{\nabla}$
\begin{equation}\label{g-tilde}
    \tilde{\nabla}_{X_i} X_j=\frac{1}{2}\left\{[X_i,X_j]+J[X_i,JX_j]-J[JX_i,X_j]\right\}\quad (i,j=1,2,3,4).
\end{equation}
The components of $\tilde{\nabla}$ are determined by
\eqref{g-tilde} and \eqref{[]4}.

The nonzero components of the tensor $F$ are: \cite{GrMaMe1}

\begin{equation}\label{Fijk}
\begin{split}
-F_{122}&=-F_{144}=2F_{212}=2F_{221}=2F_{234}\\[4pt]
\phantom{-F_{122}}
&=2F_{243}=2F_{414}=-2F_{423}=-2F_{432}=2F_{441}=\lambda_1,\\[4pt]
2F_{112}&=2F_{121}=2F_{134}=2F_{143}=-2F_{211}\\[4pt]
\phantom{2F_{112}}
&=-2F_{233}=-2F_{314}=2F_{323}=2F_{332}=-2F_{341}=\lambda_2,\\[4pt]
2F_{214}&=-2F_{223}=-2F_{232}=2F_{241}=F_{322}\\[4pt]
\phantom{2F_{112}}
&=F_{344}=-2F_{412}=-2F_{421}=-2F_{434}=-2F_{443}=\lambda_3,\\[4pt]
-2F_{114}&=2F_{123}=2F_{132}=-2F_{141}=-2F_{312}\\[4pt]
\phantom{-2F_{114}}
&=-2F_{321}=-2F_{334}=-2F_{343}=F_{411}=F_{433}=\lambda_4,\\[4pt]
\end{split}
\end{equation}
where $F_{ijk}=F(X_i,X_j,X_k)$. We receive the nonzero components
$\tilde{F}(X_i,X_j,X_k)=\tilde{F}_{ijk}$ of the tensor $\tilde{F}$
from \eqref{Ftilde-W3} using \eqref{J} and \eqref{Fijk}. They are
the following
\begin{equation}\label{tildeFijk}
\begin{split}
-2\tilde{F}_{214}&=2\tilde{F}_{223}=2\tilde{F}_{232}=-2\tilde{F}_{241}=-\tilde{F}_{322}\\[4pt]
                 &=-\tilde{F}_{344}=2\tilde{F}_{412}=2\tilde{F}_{421}=2\tilde{F}_{434}=2\tilde{F}_{443}=\lambda_1,\\[4pt]
2\tilde{F}_{114}&=-2\tilde{F}_{123}=-2\tilde{F}_{132}=2\tilde{F}_{141}=2\tilde{F}_{312}\\[4pt]
                &=2\tilde{F}_{321}=2\tilde{F}_{334}=2\tilde{F}_{343}=-2\tilde{F}_{411}=-2\tilde{F}_{433}=\lambda_2,\\[4pt]
-\tilde{F}_{122}&=-\tilde{F}_{144}=2\tilde{F}_{212}=2\tilde{F}_{221}=2\tilde{F}_{234}\\[4pt]
                &=2\tilde{F}_{243}=2\tilde{F}_{414}=-2\tilde{F}_{423}=-2\tilde{F}_{432}=2\tilde{F}_{441}=\lambda_3,\\[4pt]
2\tilde{F}_{112}&=2\tilde{F}_{121}=2\tilde{F}_{134}=2\tilde{F}_{143}=-\tilde{F}_{211}\\[4pt]
                 &=-\tilde{F}_{233}=-2\tilde{F}_{314}=2\tilde{F}_{323}=2\tilde{F}_{332}=-2\tilde{F}_{341}=\lambda_4.\\[4pt]
\end{split}
\end{equation}


The square norm of $\nabla J$ with respect to $g$, defined by
\eqref{snorm}, has the form  \cite{GrMaMe1}
\[
    \nJ=4\left(\lambda_1^2+\lambda_2^2-\lambda_3^2-\lambda_4^2\right).
\]
Then the manifold $(G,J,g)$ is isotropic K\"ahlerian if and only
if
    the condition $\lambda_1^2+\lambda_2^2-\lambda_3^2-\lambda_4^2=0$
    holds. \cite{GrMaMe1}

Analogously, the square norm $\ntJ$ of $\tilde{\nabla} J$ with
respect to $\tilde{g}$ is defined by
\[
    \ntJ=\tilde{g}^{ij}\tilde{g}^{kl}
        \tilde{g}\left(\left(\tilde{\nabla}_{e_i} J\right)e_k,\left(\tilde{\nabla}_{e_j}
    J\right)e_l\right),
\]
where $\tilde{g}^{ij}$ are the components of the inverse matrix of
$\left(\tilde{g}_{ij}\right)$. The last equation has the following
form in terms of $\tilde{\nabla}$
\[
    \ntJ=\tilde{g}^{ij}\tilde{g}^{kl}\tilde{g}^{pq}\tilde{F}_{ikp}\tilde{F}_{jlq}.
\]
Hence by \eqref{tildeFijk} we obtain
\[
    \ntJ=-8\left(\lambda_1\lambda_3+\lambda_2\lambda_4\right).
\]
The last equation implies the following
\begin{prop}\label{prop-iK}
    The manifold $(G,J,\tilde{g})$ is isotropic K\"ahlerian if and only if
    the condition $\lambda_1\lambda_3+\lambda_2\lambda_4=0$
    holds.
\end{prop}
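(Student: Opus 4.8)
The plan is to follow exactly the computational pattern already established for the metric $g$, transported verbatim to the associated connection $\tilde{\nabla}$ and metric $\tilde{g}$. The proposition asserts that $(G,J,\tilde{g})$ is isotropic K\"ahlerian precisely when $\lambda_1\lambda_3+\lambda_2\lambda_4=0$, and by the definition of isotropic K\"ahler type this is equivalent to $\ntJ=0$. So the entire statement reduces to verifying the displayed formula $\ntJ=-8\left(\lambda_1\lambda_3+\lambda_2\lambda_4\right)$; once that algebraic identity holds, the biconditional is immediate.

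First I would recall that the square norm is computed in the coordinate form $\ntJ=\tilde{g}^{ij}\tilde{g}^{kl}\tilde{g}^{pq}\tilde{F}_{ikp}\tilde{F}_{jlq}$, which the excerpt already derives from the invariant definition using $\tilde{\nabla}\tilde{g}=0$ and the properties of $\tilde{F}$. Next I would record the inverse metric. Because $\tilde{g}(X,Y)=g(X,JY)$, the matrix $\left(\tilde{g}_{ij}\right)$ is obtained from $\left(g_{ij}\right)$ in \eqref{g} together with the action of $J$ in \eqref{J}; its inverse $\tilde{g}^{ij}$ is a fixed constant matrix with entries $\pm1$ in the off-diagonal $1\!\leftrightarrow\!3$ and $2\!\leftrightarrow\!4$ positions. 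Having this explicit $4\times4$ inverse in hand, the triple sum collapses to a finite sum over the sixteen nonzero components of $\tilde{F}$ listed in \eqref{tildeFijk}.

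The main body of the argument is then the direct substitution: plug the components \eqref{tildeFijk} into the contracted expression, grouping the terms according to which pair of parameters $\lambda_a\lambda_b$ they contribute. The structure of \eqref{tildeFijk} — where the four blocks are governed by $\lambda_1,\lambda_2,\lambda_3,\lambda_4$ with a fixed pattern of signs and factors of $2$ — means the cross terms organize themselves into exactly the combination $\lambda_1\lambda_3+\lambda_2\lambda_4$, while the ``square'' terms $\lambda_a^2$ cancel because of the indefinite signature encoded in $\tilde{g}^{ij}$ (the same mechanism that turned $\nJ$ into a difference of squares now, for the associated metric, annihilates the diagonal contributions and leaves only the mixed products). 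The overall constant $-8$ emerges from counting the repeated index symmetries together with the factors of $\tfrac12$ in \eqref{tildeFijk}.

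The step I expect to be the main obstacle is the bookkeeping in this substitution: with three contracted indices each running over four values and an indefinite inverse metric carrying signs, it is easy to mismatch a factor of $2$ or drop a sign, so the real care is in organizing the sixteen components of $\tilde{F}$ by their common parameter and tracking the signature-induced signs of $\tilde{g}^{ij}$ consistently. I would manage this by exploiting the total symmetry $\tilde{F}(x,y,z)=\tilde{F}(x,z,y)$ from \eqref{F-prop} to halve the number of independent terms, and by cross-checking the outcome against the already-established formula for $\nJ$ via the interconnection \eqref{Ftilde-W3}, which relates $\tilde{F}$ to $F$ and should make the parameter roles $(\lambda_1,\lambda_2,\lambda_3,\lambda_4)$ permute in a predictable way. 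Once the identity $\ntJ=-8\left(\lambda_1\lambda_3+\lambda_2\lambda_4\right)$ is confirmed, the proposition follows at once from the definition of isotropic K\"ahler type.
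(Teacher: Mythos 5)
Your proposal follows exactly the paper's route: the paper likewise obtains $\ntJ=-8\left(\lambda_1\lambda_3+\lambda_2\lambda_4\right)$ by direct contraction of the components \eqref{tildeFijk} with the inverse of $\tilde{g}$ (which is indeed the constant matrix with $-1$ in the $1\!\leftrightarrow\!3$ and $2\!\leftrightarrow\!4$ positions) and then reads the proposition off the definition of isotropic K\"ahler type. The only slip is bookkeeping: \eqref{tildeFijk} lists forty nonzero components of $\tilde{F}$, not sixteen, but this does not affect the validity of the argument.
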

\begin{rem}\label{rem1}
As is known from \cite{GrMaMe1}, $(G,J,g)$ is isotropic
K\"ahlerian if and only if the set of vectors with the coordinates
$(\lambda_1,\lambda_2,\lambda_3,\lambda_4)$ at an arbitrary point
$p\in G$ describes the isotropic cone in $T_pG$ with respect to
$g$. Obviously, $(G,J,\tilde{g})$ is isotropic K\"ahlerian if and
only if the the same set of vectors describes the isotropic cone
in $T_pG$ but with respect to $\tilde{g}$.
\end{rem}
\begin{rem}\label{rem2}
Let us note that two manifolds $(G,J,g)$ and $(G,J,\tilde{g})$ can
be isotropic K\"ahlerian by independent way. For example:
   \begin{enumerate}
    \renewcommand{\labelenumi}{(\roman{enumi})}
    \item
    if $\lambda_3= \lambda_2$ and $\lambda_4=-\lambda_1$,
    then both $(G,J,g)$ and $(G,J,\tilde{g})$ are isotropic K\"ahler
    manifolds;
    \item
    if $\lambda_3= \lambda_1 \neq 0$ and $\lambda_4= \lambda_2$, then
    $(G,J,g)$ is an isotropic K\"ahler manifold but $(G,J,\tilde{g})$ is not isotropic
    K\"ahlerian;
    \item
    if $\lambda_2= \lambda_1 \neq 0$, $\lambda_4=- \lambda_3$ and $|\lambda_1|\neq |\lambda_3|$, then
    $(G,J,g)$ is not an isotropic K\"ahler manifold but $(G,J,\tilde{g})$ is isotropic
    K\"ahlerian.
    \end{enumerate}
\end{rem}

The components $R_{ijks}=R(X_i,X_j,X_k,X_s)$ $(i,j,k,s=1,2,3,4)$
of the curvature tensor $R$ on $(G,J,g)$ are: \cite{GrMaMe1}
\begin{equation}\label{Rijks}
\begin{array}{ll}
    R_{1221}=-\frac{1}{4}\left(\lambda_1^2+\lambda_2^2\right),\quad
    &
    R_{1331}=\frac{1}{4}\left(\lambda_2^2-\lambda_4^2\right),\\[4pt]
    R_{1441}=-\frac{1}{4}\left(\lambda_1^2-\lambda_4^2\right),\quad
    &
    R_{2332}=\frac{1}{4}\left(\lambda_2^2-\lambda_3^2\right),\\[4pt]
    R_{2442}=\frac{1}{4}\left(\lambda_1^2-\lambda_3^2\right),\quad
    &
    R_{3443}=\frac{1}{4}\left(\lambda_3^2+\lambda_4^2\right),\\[4pt]
    R_{1341}=R_{2342}=-\frac{1}{4}\lambda_1\lambda_2,\quad
    &
    R_{2132}=-R_{4134}=\frac{1}{4}\lambda_1\lambda_3,\\[4pt]
    R_{1231}=-R_{4234}=\frac{1}{4}\lambda_1\lambda_4,\quad
    &
    R_{2142}=-R_{3143}=\frac{1}{4}\lambda_2\lambda_3,\\[4pt]
    R_{1241}=-R_{3243}=\frac{1}{4}\lambda_2\lambda_4,\quad
    &
    R_{3123}=R_{4124}=\frac{1}{4}\lambda_3\lambda_4.\\[4pt]
\end{array}
\end{equation}

We get the nonzero components
$\tilde{R}_{ijks}=\tilde{R}(X_i,X_j,X_k,X_s)$, ($i,j,k,s=1,2,3,4$)
of $\tilde{R}$ on $(G,J,\tilde{g})$, having in mind \eqref{Fijk},
\eqref{Rijks}, \eqref{RtildeR}, \eqref{J} as follows:
\begin{equation}\label{tRijks}
\begin{array}{c}
   \tilde{R}_{1221}=-\tilde{R}_{1441}=-\tilde{R}_{2332}=\tilde{R}_{3443}
    =\lambda_1\lambda_3+\lambda_2\lambda_4, \\[4pt]
\begin{array}{c}
    \tilde{R}_{1331}=-\frac{1}{2}\lambda_2\lambda_4,
\quad
    \tilde{R}_{2442}=-\frac{1}{2}\lambda_1\lambda_3,
\\[4pt]
    \tilde{R}_{1234}=\tilde{R}_{1432}
        =\frac{3}{4}\left(\lambda_1\lambda_3+\lambda_2\lambda_4\right), \\[4pt]
\end{array}
\\
\begin{array}{l}
    \tilde{R}_{1241}=\frac{1}{4}\left(4\lambda_1^2+2\lambda_2^2+\lambda_3^2-2\lambda_4^2\right),\\[4pt]
    \tilde{R}_{2132}=\frac{1}{4}\left(2\lambda_1^2+4\lambda_2^2-2\lambda_3^2+\lambda_4^2\right),\\[4pt]
    \tilde{R}_{4134}=\frac{1}{4}\left(-2\lambda_1^2+\lambda_2^2+2\lambda_3^2+4\lambda_4^2\right),\\[4pt]
    \tilde{R}_{3243}=\frac{1}{4}\left(\lambda_1^2-2\lambda_2^2+4\lambda_3^2+2\lambda_4^2\right),\\[4pt]
\end{array}
\\
\begin{array}{l}
    \tilde{R}_{1231}=\tilde{R}_{2142}=-\frac{1}{4}\left(2\lambda_1\lambda_2+3\lambda_3\lambda_4\right),\\[4pt]
    \tilde{R}_{1341}=\tilde{R}_{4124}=-\frac{1}{4}\left(2\lambda_1\lambda_4-3\lambda_2\lambda_3\right),\\[4pt]
    \tilde{R}_{3143}=\tilde{R}_{4234}=-\frac{1}{4}\left(3\lambda_1\lambda_2+2\lambda_3\lambda_4\right),\\[4pt]
    \tilde{R}_{3123}=\tilde{R}_{2342}=\frac{1}{4}\left(3\lambda_1\lambda_4-2\lambda_2\lambda_3\right).\\[4pt]
\end{array}
\end{array}
\end{equation}

The scalar curvature $\tau$ on $(G,J,g)$ is \cite{GrMaMe1}
\[
    \tau=-\frac{3}{2}\left(\lambda_1^2+\lambda_2^2-\lambda_3^2-\lambda_4^2\right),
\]
and for the scalar curvature $\tilde{\tau}$ on $(G,J,\tilde{g})$
we obtain
\begin{equation}\label{taut}
    \tilde{\tau}=5\left(\lambda_1\lambda_3+\lambda_2\lambda_4\right).
\end{equation}
    The manifold $(G,J,g)$ is isotropic K\"ahlerian if and only if it has a zero scalar curvature $\tau$. \cite{GrMaMe1}
\propref{prop-iK} and equation \eqref{taut} imply
\begin{prop}\label{prop-iKt}
    The manifold $(G,J,\tilde{g})$ is isotropic K\"ahlerian if and only if
    it has zero scalar curvature $\tilde{\tau}$.
\end{prop}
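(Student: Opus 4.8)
The plan is to observe that both sides of the claimed equivalence have already been reduced, in the discussion preceding the statement, to one and the same algebraic condition on the structure constants $\lambda_1,\lambda_2,\lambda_3,\lambda_4$. On one hand, \propref{prop-iK} asserts that $(G,J,\tilde{g})$ is isotropic K\"ahlerian precisely when $\lambda_1\lambda_3+\lambda_2\lambda_4=0$. On the other hand, the explicit expression \eqref{taut} for the scalar curvature reads $\tilde{\tau}=5(\lambda_1\lambda_3+\lambda_2\lambda_4)$. Hence the full statement will follow by transitivity once these two facts are placed side by side.

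Concretely, I would argue as follows. Since the factor $5$ in \eqref{taut} is nonzero, the identity $\tilde{\tau}=5(\lambda_1\lambda_3+\lambda_2\lambda_4)$ shows that $\tilde{\tau}=0$ holds if and only if $\lambda_1\lambda_3+\lambda_2\lambda_4=0$. Combining this equivalence with the criterion of \propref{prop-iK}, the condition $\tilde{\tau}=0$ is seen to be equivalent to $(G,J,\tilde{g})$ being isotropic K\"ahlerian, which is exactly the assertion.

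The only genuine content here lies upstream, in establishing the two ingredients: the computation $\ntJ=-8(\lambda_1\lambda_3+\lambda_2\lambda_4)$ underlying \propref{prop-iK}, obtained from the components \eqref{tildeFijk} of $\tilde{F}$ by contraction against the inverse metric, and the computation of $\tilde{\tau}$ in \eqref{taut} from the curvature components \eqref{tRijks}. Both are routine but lengthy index contractions, and I would treat them as already done. Given these, the proposition itself requires no further work beyond the one-line comparison above, so there is no real obstacle to overcome. I note in passing that this mirrors the corresponding fact for $(G,J,g)$, where $\tau=-\tfrac{3}{2}(\lambda_1^2+\lambda_2^2-\lambda_3^2-\lambda_4^2)$ vanishes exactly on the isotropic K\"ahler locus; thus both metrics of the pair exhibit the same qualitative link between isotropic K\"ahlerness and the vanishing of the scalar curvature.
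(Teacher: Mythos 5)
Your argument is correct and coincides with the paper's own derivation: the proposition is obtained there precisely by combining \propref{prop-iK} with the formula \eqref{taut}, exactly as you do. The observation that the real content lies in the upstream computations of $\ntJ$ and $\tilde{\tau}$ is also accurate.
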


\begin{rem}\label{rem3}
Let us note that two manifolds $(G,J,g)$ and $(G,J,\tilde{g})$ can
be scalar flat by independent way.
\end{rem}
Let us recall, that the manifold $(G,J,g)$ has vanishing Weyl
tensor $W$ \cite{GrMaMe1}. Now, let us consider the Weyl tensor
$\tilde{W}$ on $(G,J,\tilde{g})$ determined by analogy to $W$ by
\eqref{W} and \eqref{psi-pi}. Taking into account \eqref{J},
\eqref{g}, \eqref{tRijks} and \eqref{taut}, we receive the
following nonzero components of $\tilde{W}$:
\begin{equation}\label{tWijks}
\begin{array}{l}
    \tilde{W}_{1221}=-\tilde{W}_{1441}=-\tilde{W}_{2332}=\tilde{W}_{3443}=3\tilde{W}_{1234}=3\tilde{W}_{1432}\\[4pt]
    \phantom{\tilde{W}_{1221}}
                    =-\frac{3}{2}\tilde{W}_{1331}=-\frac{3}{2}\tilde{W}_{2442}
    =\lambda_1\lambda_3+\lambda_2\lambda_4.
\end{array}
\end{equation}

According to \propref{prop-iKt} and Equations \eqref{taut},
\eqref{tWijks}, we establish the truthfulness of the following
\begin{thm}
    The following conditions are equivalent for the manifold $(G,J,\tilde{g})$:
    \begin{enumerate}
    \renewcommand{\labelenumi}{(\roman{enumi})}
    \item
    $(G,J,\tilde{g})$ is an isotropic K\"ahler manifold;
    \item
    the Weyl tensor $\tilde{W}$ vanishes;
    \item
    the condition $\lambda_1\lambda_3+\lambda_2\lambda_4=0$
    holds;
    \item
    the scalar curvature $\tilde{\tau}$ vanishes.
    \end{enumerate}
  $\hfill\Box$
\end{thm}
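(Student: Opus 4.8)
The plan is to prove all four equivalences by pinning each of conditions (i), (ii), and (iv) separately to the single algebraic condition (iii), namely $\lambda_1\lambda_3+\lambda_2\lambda_4=0$; once each of the others is shown to hold exactly when (iii) holds, the full cycle of equivalences closes automatically. Every ingredient needed for this has already been assembled in the computations preceding the statement, so the proof is a matter of reading off those results rather than of fresh calculation.

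First I would record the equivalence (i) $\Leftrightarrow$ (iii). This is precisely the content of \propref{prop-iK}, whose underlying input is the formula $\ntJ=-8(\lambda_1\lambda_3+\lambda_2\lambda_4)$ for the square norm of $\tilde{\nabla}J$ with respect to $\tilde{g}$. Since the scalar factor is nonzero, the isotropic K\"ahler condition $\ntJ=0$ collapses to exactly $\lambda_1\lambda_3+\lambda_2\lambda_4=0$. Next I would establish (iii) $\Leftrightarrow$ (iv) from equation \eqref{taut}, which gives $\tilde{\tau}=5(\lambda_1\lambda_3+\lambda_2\lambda_4)$; again the nonzero factor $5$ forces $\tilde{\tau}$ to vanish precisely when (iii) holds. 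Combined with the previous step, this already reproduces \propref{prop-iKt}.

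Finally I would settle (ii) $\Leftrightarrow$ (iii) using the explicit Weyl components \eqref{tWijks}: every nonzero component listed there is a fixed nonzero multiple of $\lambda_1\lambda_3+\lambda_2\lambda_4$, so $\tilde{W}$ vanishes identically if and only if this quantity vanishes. This furnishes the last link and closes the loop among (i), (ii), (iii), and (iv).

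As for the main obstacle, within the proof itself there is essentially none, since the argument is a synthesis of \propref{prop-iKt} together with equations \eqref{taut} and \eqref{tWijks}. The genuinely delicate work lies upstream, in the passage from the curvature components \eqref{tRijks} to the Weyl components \eqref{tWijks} via the definition \eqref{W}--\eqref{psi-pi}: there one must contract $\tilde{R}$ to the Ricci tensor, form $\tilde{\tau}$, and subtract the curvature terms built from $\tilde{g}$, taking care with the indefinite signature $(n,n)$ when raising indices. That computation is carried out before the statement, however, so inside the proof the only thing to observe is that the resulting components factor uniformly through $\lambda_1\lambda_3+\lambda_2\lambda_4$.
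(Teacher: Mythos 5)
Your proposal is correct and follows essentially the same route as the paper, which likewise reads the theorem off from \propref{prop-iK}, \propref{prop-iKt}, equation \eqref{taut}, and the fact that every nonzero component in \eqref{tWijks} is a nonzero multiple of $\lambda_1\lambda_3+\lambda_2\lambda_4$. Nothing further is needed.
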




\small{

\end{document}
\begin{thebibliography}{99}

\bibitem{GaBo}
G.~Ganchev, A.~Borisov, \textit{Note on the Almost Complex
Manifolds with a Norden Metric}, Compt. Rend. Acad. Bulg. Sci.,
\textbf{39} (1986), no.~5, 31--34.

\bibitem{GaGrMi}
G.~Ganchev, K.~Gribachev, V.~Mihova. \textit{B-Connections and
their Conformal Invariants on Conformally Kahler Manifolds with
B-Metric}, Publ. Inst. Math. (Beograd) (N.S.), \textbf{42(56)}
(1987), 107--121.

\bibitem{GrMeDj}
K.~I.~Gribachev, D.~G.~Mekerov, G.~D.~Djelepov.
\textit{Generalized B-Manifolds}, Compt. Rend. Acad. Bulg. Sci.,
\textbf{38} (1985), no.~3, 299--302.

\bibitem{MekMan}
D.~Mekerov, M.~Manev. \textit{On the geometry of quasi-K\"{a}hler
manifolds with Norden metric}, Nihonkai Math. J., 2005 (to
appear).

\bibitem{GRMa}
E.~Garc\'ia-R\'io, Y.~Matsushita. \textit{Isotropic K\"ahler
Structures on Engel 4-Manifolds}, J. Geom. Phys., \textbf{33}
(2000), 288--294.

\bibitem{GrMaMe1}
K.~Gribachev, M.~Manev, D.~Mekerov. \textit{A Lie group as a
4-dimensional quasi-K\"ahler manifold with Norden metric}, JP
Jour. Geometry \& Topology, \textbf{6} (2006), no.~1, 55--68.

\end{thebibliography}
